\newtheorem{theorem}{Theorem}
\newtheorem{lemma}[theorem]{Lemma}
\theoremstyle{definition}
\newtheorem*{definition*}{Definition}
\newtheorem*{example*}{Example}
\newtheorem*{remark*}{Remark}
\newcommand{\subjclass}[2][2010]{%
  \let\@oldtitle\@title%
  \gdef\@title{\@oldtitle\footnotetext{#1 \emph{Mathematics Subject Classification:} #2.}}%
}
\newcommand{\keywords}[1]{%
  \let\@@oldtitle\@title%
  \gdef\@title{\@@oldtitle\footnotetext{\emph{Key words and phrases:} #1.}}%
}
\begin{document}
\title{Topological complexity and efficiency of motion planning algorithms}
\subjclass{55M30 (53C22, 68T40)}
\keywords{cut locus, geodesic, motion planning, topological complexity}
\author{Z. B\l{}aszczyk\footnote{Supported by the National Science Centre grant 2014/12/S/ST1/00368.}, J. Carrasquel\footnote{Supported by the Belgian Interuniversity Attraction Pole (IAP) within the framework ``Dynamics, Geometry and Statistical Physics'' (DYGEST P7/18).}} 
\date{}

\maketitle
\begin{abstract}
We introduce a variant of Farber's topological complexity, defined for smooth compact orientable Riemannian manifolds, which takes into account only motion planners with the lowest possible ``average length'' of the output paths. We prove that it never differs from topological complexity by more than $1$, thus showing that the latter invariant addresses the problem of the existence of motion planners which are ``efficient''.
\end{abstract}

\maketitle

\section{Introduction}
A \emph{motion planner} in a topological space $X$ is a section of the fibration $\pi\colon X^I\rightarrow X\times X$ given by $\pi(\gamma):=\big(\gamma(0),\gamma(1)\big)$. 
If $X$ is the configuration space of a mechanical system $S$ (i.e. the space of all of its possible states), the space $X^I$ of continuous paths in $X$ can be interpreted as the \emph{space of motions} of~$S$, and a section of $\pi$ is then an algorithm describing how to navigate between any two given states of $S$.\\

The study of motion planners in the above setting was initiated by Farber \cite{Farber03, Farber04, Farber08}. He observed that a continuous motion planner on $X$ exists if and only if $X$ is contractible. This resulted in the introduction of the following invariant, which gives a way of measuring complexity of the motion planning problem.

\begin{definition*} A family $\sigma=\{\sigma_i\colon G_i\rightarrow X^I\}_{i=0}^m$ of continuous local sections of~$\pi$ is called an \emph{$m$-motion planner} on $X$ if:
\begin{enumerate}
\item[(1)] each \textit{domain of continuity} $G_i$ is a locally compact subset of $X \times X$, 
\item[(2)] $G_i\cap G_j = \emptyset$, $i\neq j$, and
\item[(3)] $X\times X = G_0 \cup G_1 \cup \cdots \cup G_m$.
\end{enumerate} 
\textit{Topological complexity} of $X$, denoted $\tc(X)$, is the minimal integer $m\geq 0$ such that there exists an $m$-motion planner on $X$. 
\end{definition*}

In the remaining part of the paper, we take the term ``motion planner'' to mean an $m$-motion planner for some $m \geq 0$. We refer the reader to \mbox{\cite[Chapter 4]{Farber08}} for an elaboration of the notion of topological complexity. (In particular, we note that $\tc$ is typically defined differently. However, if $X$ is an Euclidean neighbourhood retract, which is the only case we will be interested in, the definitions coincide.)

\begin{figure}[!h]
\captionsetup{width=0.85\textwidth}
\begin{center}
\scalebox{0.85}{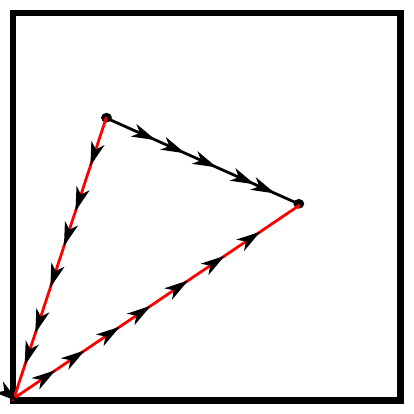}
\caption{\small Paths between states $p$ and $q$ issued by two different $0$-motion planners, $\sigma_1$ and $\sigma_2$. The first one is clearly the more efficient one and, intuitively, the most efficient it can be. The question is, how to make this distinction in more complicated situations?}
\end{center}
\end{figure}

A shortcoming of Farber's approach to complexity of the motion planning problem is that it does not take into consideration any notion of efficiency, e.g. measured in terms of covered distance or spent energy. It is very natural that, given a motion planner, one would like to somehow quantify its efficiency and then, possibly even more urgently, understand how far-off of the most efficient planner it is.\\

The aim of this note is to show that $\tc$ actually addresses the problem hinted at above. In order to do this, we introduce the notion of \emph{efficient topological complexity}, denoted $\eTC$, which takes into account only motion planners with the lowest possible ``average length'' of paths, and then prove that it never differs from $\tc$ by too much, at least for nice spaces:

\begin{theorem}\label{thm:main}
If $X$ is a smooth closed orientable Riemannian manifold, then 
\[\tc(X) \leq \eTC(X) \leq \tc(X)+1.\]
\end{theorem}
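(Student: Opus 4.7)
The inequality $\tc(X) \leq \eTC(X)$ is immediate from the definitions: any motion planner witnessing $\eTC(X) = m$ is by definition also an $m$-motion planner, so the topological complexity is at most $\eTC(X)$.

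For $\eTC(X) \leq \tc(X) + 1$, the plan is to exploit the Riemannian structure of $X$ through minimizing geodesics. Let $K \subset X \times X$ denote the set of pairs $(p,q)$ for which $q$ lies in the cut locus of $p$. Standard Riemannian geometry for a smooth closed manifold provides three facts I would invoke: $K$ is closed in $X \times X$ (using that being connected by two distinct minimizing geodesics, or by one with a conjugate endpoint, is a closed condition); $K$ has measure zero with respect to the product Riemannian volume, by Fubini and the classical fact that each individual cut locus has measure zero in $X$; and on the open complement $U := (X \times X) \setminus K$ there is a continuous section $s\colon U \to X^I$ sending $(p,q)$ to the reparametrization on $I$ of the unique minimizing geodesic from $p$ to $q$. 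Crucially, the path $s(p,q)$ has length exactly $d(p,q)$.

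Starting from any $\tc(X)$-motion planner $\{\sigma_i \colon G_i \to X^I\}_{i=0}^{\tc(X)}$, I would then set
\[G_0' := U, \quad \sigma_0' := s, \qquad G_{i+1}' := G_i \cap K, \quad \sigma_{i+1}' := \sigma_i|_{G_i \cap K}, \quad i = 0, \ldots, \tc(X).\]
The resulting $\tc(X) + 2$ local sections have pairwise disjoint, locally compact domains (the set $U$ is open, and each $G_i \cap K$ is the intersection of the locally closed subset $G_i$ of the manifold $X \times X$ with the closed set $K$, hence is itself locally closed and so locally compact) whose union is all of $X \times X$. Thus they form a $(\tc(X) + 1)$-motion planner.

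The step I expect to be the main obstacle is verifying that this new motion planner is efficient in the precise sense adopted in the definition of $\eTC$. Under the natural interpretation in which the average length is an integral of the length of output paths against a measure on $X \times X$ that is absolutely continuous with respect to the product Riemannian volume, the argument is clean: the behavior of the planner on $K$ contributes nothing, since $K$ is a null set, so the average length is equal to $\int_{X \times X} d(p,q)\,d\mu$. This value is a universal lower bound over all motion planners, because every path from $p$ to $q$ has length at least $d(p,q)$. Hence our constructed planner attains the minimum and is efficient, yielding $\eTC(X) \leq \tc(X) + 1$.
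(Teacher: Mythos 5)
Your proposal is correct and follows essentially the same route as the paper: place the unique-minimizing-geodesic section on the open, full-measure complement of the cut-locus pairs (whose key properties are the paper's Lemma \ref{lemma:exp}), restrict the given optimal motion planner to the remaining closed null set, and observe that the resulting planner's average length equals $\int_{X\times X} d$, the universal lower bound. The only cosmetic difference is that you justify continuity of the geodesic section by appeal to standard facts, whereas the paper derives it explicitly from the diffeomorphism property of the exponential map.
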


\section{Efficient topological complexity}

Fix once and for all a smooth compact orientable Riemannian manifold $X$ and write $d$ for its Riemannian metric. Given a path $\alpha \in X^I$, let $\ell(\alpha)$ denote its length, understood in the metric sense for paths which are merely continuous.
We do not assume that $\alpha$ is rectifiable, hence it is possible that $\ell(\alpha)=\infty$.

\begin{definition*}
\begin{enumerate}
\item[(1)] The \emph{length} of a motion planner $\sigma\colon X\times X\rightarrow X^I$ is 
\[\ell(\sigma):=\int_{X\times X}\ell\circ\sigma.\]
Note that each domain of continuity of $\sigma$ is measurable and thus $\ell(\sigma)$ is well-defined. Moreover, it is clear that $\int_{X\times X} d \leq \ell(s)$.
\item[(2)] The \textit{efficient topological complexity} of $X$, denoted $\eTC(X)$, is the minimal integer $m\geq 0$ such that there exists an $m$-motion planner $\sigma$ on $X$ with $\ell(\sigma)=\int_{X\times X} d$. Such a motion planner $\sigma$ will be called \textit{efficient}.
\end{enumerate}
\end{definition*}

\noindent It is not \textit{a priori} clear  whether efficient motion planners always exist. This will follow from our proof of Theorem \ref{thm:main}, which we briefly prepare for now.\\

Additionally assume that $X$ has no boundary. Write $\mathcal{U}_p$ for the maximal normal neighbourhood in $T_pX$ and $\cut(p)$ for the cut locus of a point $p \in X$. Then $\exp_p(\mathcal{U}_p) = X \setminus \cut(p)$ and $\exp_p \colon \mathcal{U}_p \to X \setminus \cut(p)$ is a diffeomorphism \cite[Proposition 28.2]{Postnikov}. Let 
\[ \mathcal{V} := \big\{(p,q) \in X \times X \,|\, q \notin \cut(p)\big\}.\]

\begin{lemma}\label{lemma:exp}
\begin{enumerate}
\item[\textnormal{(1)}] The map $\exp \colon \bigcup_{p\in M} \{p\}\times \mathcal{U}_p \to \mathcal{V}$ given by 
\[ \exp(p,v) := \big(p, \exp_p(v)\big)\] 
is a diffeomorphism. In particular, $\mathcal{V} \subseteq X\times X$ is an open subset.
\item[\textnormal{(2)}] The complement of $\mathcal{V}$ in $X\times X$ is a measure-zero subset.
\end{enumerate}
\end{lemma}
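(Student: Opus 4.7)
The strategy for part (1) is to show that the domain $D := \bigcup_{p \in X} \{p\} \times \mathcal{U}_p$ is an open subset of the tangent bundle $TX$, that the map $E\colon TX \to X \times X$ defined by $E(p,v) := (p, \exp_p(v))$ is smooth, that $E|_D$ is a bijection onto $\mathcal{V}$, and that its differential is invertible at every point. The first ingredient is the classical continuity of the tangent cut-distance function $\tau\colon SX \to (0,\infty]$, defined on the unit tangent bundle, which records the distance to the cut point along each unit-speed geodesic. Since $\mathcal{U}_p$ coincides (away from the origin) with $\{v \in T_pX : |v| < \tau(p, v/|v|)\}$, continuity of $\tau$ immediately yields openness of $D$ in $TX$ and, by an analogous argument, openness of $\mathcal{V}$ in $X \times X$.

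Bijectivity of $E|_D$ onto $\mathcal{V}$ is a direct consequence of the pointwise statement that $\exp_p \colon \mathcal{U}_p \to X \setminus \cut(p)$ is a diffeomorphism, already cited from Postnikov. For the differential, I would choose local product coordinates on $TX$ and observe that the Jacobian of $E$ at $(p,v)$ has the block-triangular form
\[ \begin{pmatrix} I & 0 \\ * & (d\exp_p)_v \end{pmatrix},\]
where $(d\exp_p)_v$ is invertible because every $v$ in the open set $\mathcal{U}_p$ lies strictly before the first conjugate point along the geodesic $\gamma_v$. Thus $E|_D$ is a smooth bijective local diffeomorphism, hence a global diffeomorphism.

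For part (2), the plan is to appeal to Fubini's theorem. Since $\mathcal{V}$ is open by part (1), $\mathcal{V}^c$ is closed and therefore measurable. The classical fact that $\cut(p)$ has measure zero in $X$ for each fixed $p$ follows from expressing the tangent cut locus in $T_pX$ as the radial graph of the continuous function $\tau(p,\cdot)$ on the unit sphere, so that it has Hausdorff dimension at most $n-1$, and the image of such a set under the smooth map $\exp_p$ still has measure zero. Integrating the characteristic function of $\mathcal{V}^c$ and applying Fubini then concludes.

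The principal technical obstacle is the continuity of $\tau$, which propagates into both parts: it is what ensures the openness of $D$ and $\mathcal{V}$ as well as the measure-zero claim for individual cut loci. Once this standard Riemannian-geometric fact is granted, the remaining arguments are essentially formal. A secondary subtlety is the behaviour of the zero section of $TX$ when describing $\mathcal{U}_p$ via $\tau$, but this is easy to accommodate since $0 \in \mathcal{U}_p$ always and $E(p,0) = (p,p) \in \mathcal{V}$.
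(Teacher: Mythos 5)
Your argument is correct and follows essentially the same route as the paper's: for (1), bijectivity from the pointwise diffeomorphism $\exp_p\colon\mathcal{U}_p\to X\setminus\cut(p)$ plus invertibility of the differential, then ``bijective local diffeomorphism, hence diffeomorphism''; for (2), a Fubini-type argument applied to the fact that each $\cut(p)$ is null (which the paper simply cites from Gallot--Hulin--Lafontaine). Your extra details are genuine improvements: the paper never verifies that $\bigcup_{p}\{p\}\times\mathcal{U}_p$ is open in $TX$ (needed even to make sense of the local-diffeomorphism claim and to deduce openness of $\mathcal{V}$), and your block-triangular Jacobian makes the invertibility assertion explicit. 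One small inaccuracy in your sketch of the auxiliary fact that $\cut(p)$ is null: the radial graph of a merely continuous $\tau(p,\cdot)$ need not have Hausdorff dimension at most $n-1$ (graphs of continuous functions can have Hausdorff dimension as large as $n$); what is true, and all you need, is that it has Lebesgue measure zero, since each ray from the origin meets it in at most one point and Fubini in polar coordinates applies --- and measure zero, unlike a dimension bound, is exactly the property preserved by the locally Lipschitz map $\exp_p$.
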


\begin{proof}
(1) Since $\exp_p \colon \mathcal{U}_p \to X \setminus \cut(p)$ is a diffeomorphism for any $p \in X$, $\exp$ is a bijection and, furthermore, its derivative is invertible at any point $(p,v) \in \bigcup_{p\in M} \{p\}\times \mathcal{U}_p$. Consequently $\exp$ is a a bijective local diffeomorphism, hence a diffeomorphism. 

(2) Since $(X\times X) \setminus \mathcal{V} = \big\{(p,q)\in X\times X \;|\; q \in \cut(p) \big\}$ and $\cut(p)$ is a measure-zero subset for any $p \in X$ \cite[Lemma 3.96]{GallotHulinLafontaine}, the conclusion follows immediately from \cite[Section 42, Theorem 1]{Berberian}.
\end{proof}

We can now give the proof of our main result.\\

\noindent\textit{Proof of Theorem \ref{thm:main}.} Clearly, $\tc(X) \leq \eTC(X)$. We will show that $\eTC(X) \leq \tc(X)+1$. Let $\tc(X)=m-1$ and choose an $(m-1)$-motion planner $\{\omega_i\colon G_i\rightarrow X^I\}_{i=1}^m$ on $X$. Set 
$$P_{0}:= \mathcal{V} = \big\{(p,q) \in X\times X \,|\, q \notin \cut(p)\big\}$$
and define $\sigma_0\colon P_0 \to X^I$ by assigning
\[ \sigma_0(p,q)(t) := \exp\!\big(p, t\cdot\textnormal{proj}_2\big(\!\exp^{-1}(p,q)\big)\big),\]
where $\rm{proj}_2$ is the projection onto the second coordinate. Note that $\sigma_0(p,q)$ is the unique minimal geodesic from $p$ to~$q$, so that $\ell\big(\sigma_0(p,q)\big)=d(p,q)$. It follows from Lemma \ref{lemma:exp} that $P_0 \subseteq X \times X$ is locally compact and $\sigma_0 \colon P_0 \to X^I$ is continuous. Now set, for $i=1,\ldots,m$,
\begin{itemize}
\item $P_i := (X\times X \setminus P_0) \cap G_i$, and 
\item $\sigma_i := {\omega_i}_{|P_i}$.
\end{itemize}
Then $\sigma = \{\sigma_i \colon P_i \to X^I\}_{i=0}^{m}$ constitutes an $m$-motion planner on~$X$. Again by Lemma \ref{lemma:exp}, the complement of $P_0$ is a measure-zero subset, hence so are the sets $P_i$, $i=1,\ldots, m$. Therefore 
$$\ell(\sigma) = \int_{X\times X} \ell\circ\sigma= \int_{P_0} \ell\circ\sigma_0=\int_{P_0} d=\int_{X\times X}d,$$
which concludes the proof.$\hfill\square$\\

\begin{remark*}
The proof of Theorem \ref{thm:main} shows that in order to estimate topological complexity of $X$, it is enough to understand how to motion plan between points $p$, $q \in X$ with $q\in \cut(p)$. This observation can be formalized through the notion of \textit{relative topological complexity}. Namely, if $A\subseteq X\times X$, then $\tc_X(A)$ is expressed in terms of local  sections of the fibration $\pi^{-1}(A) \to A$. Therefore, by \cite[Proposition 4.24]{Farber08}, setting $\mathcal{V}^c=X\times X\setminus \mathcal{V}$, we obtain
\[\tc_X(\mathcal{V}^c)\le\tc(X)\le \eTC(X)\le \tc_X(\mathcal{V}^c)+1.\] 
This is, in fact, Farber's \cite[Example 4.8]{Farber08} approach to motion planners on spheres: recall that if $S^n$ is embedded in $\mathbb{R}^{n+1}$ in the usual manner, the cut locus of any point $p \in S^n$ consists precisely of the antipode of~$p$. The difficulty thus boils down to estimating $\tc_{S^n}\big(\big\{(p,-p) \,|\, p \in S^n\big\}\big)$. 
\end{remark*}

Theorem \ref{thm:main} shows that, perhaps a little surprisingly, $\eTC(X)$ depends on the choice of a Riemannian metric on $X$ only in a very restricted manner. A natural question to consider is whether it depends on that choice at all? The following simple example sheds some light on this problem in the case when~$X$ has a non-empty boundary. (Which, admittedly, is not covered by Theorem \ref{thm:main}.) 

\begin{example*}
Let $D^2$ be the two-dimensional unit disk in $\mR^2$. Straight line segments give rise to a continuous efficient motion planner on $D^2$, hence $\eTC(D^2)=\tc(D^2)=0$. Now consider $D^2$ embedded in $\mR^3$ as the upper hemisphere $S^2_+$ of the two-dimensional unit sphere. Suppose that $\eTC(S^2_+)=0$. Then there exists a continuous motion planner $\sigma \colon S^2_+ \times S^2_+ \to (S^2_+)^I$ with $\int_{X\times X}(\ell\circ\sigma-d)=0$. This and continuity of $\ell\circ\sigma-d$ imply that $\ell\circ\sigma=d$. Thus $\sigma(p,q)$ traverses the arc of a minimal geodesic from $p$~to $q$ for all $p$, $q \in S^2_+$ by \mbox{\cite[Chapter 3, Corollary 3.9]{DoCarmo}}. 
This, however, is absurd, because such a motion planner cannot be continuous on the set $A$ of pairs of antipodal points from the boundary circle. On the other hand, it is easy two to construct an efficient motion planner on~$S^2_+$ with two domains of continuity. Indeed, $\sigma$ is continuous on $S^2_+ \times S^2_+ \setminus A$, and in order to navigate on $A$ it suffices to fix orientation of the boundary circle. 
\end{example*}

We would also like to draw the reader's attention to the fact that the motion planner $\sigma_0 \colon\mathcal{V} \rightarrow X^I$ defined almost everywhere on $X \times X$ in the proof of Theorem~\ref{thm:main} has the following desirable properties: 
\begin{itemize}
\item If the initial and terminal states coincide, the output path is constant (cf. \cite{Iwase10,Iwase12}).
\item The path from $p$ to $q$ is the same as that from $q$ to $p$, only traversed in the opposite direction (cf. \cite{Farber07}).
\item Re-evaluating a motion in its middle does not change the choice of navigation arc, i.e. if $t_0\in I$ is the re-evaluation instant, then $$\sigma_0\big(\sigma_0(p,q)(t_0),q\big)(t)=\sigma_0(p,q)\big(t_0+(1-t_0)t\big).$$
\end{itemize}

\noindent The last property draws attention to the problem of algorithmically finding a vector in $T_pX$ pointing in the direction of a minimizing geodesic from $p$ to~$q$, rather than deciding on the whole motion at once. This approach highlights the concept of \emph{autonomy} of a mechanical system, allowing it to plan its motion \emph{on-the-fly}, perhaps making it possible to correct the path in case obstacles appear.\\

\noindent\textbf{Acknowledgements.} 
The authors thank the organizers of \textit{Workshop on TC and Related Topics}, held in Oberwolfach in March 2016, where this project was started. The first author gratefully acknowledges hospitality of Universit\'e catholique de Louvain during his stay in May 2016, where the bulk of this work was carried out. The second author acknowledges the Belgian Interuniversity Attraction Pole (IAP) for support within the framework ``Dynamics, Geometry and Statistical Physics'' (DYGEST).

\noindent \textsc{Zbigniew B\l{}aszczyk}\\
Faculty of Mathematics and Computer Science\\
Adam Mickiewicz University\\
Umultowska 87\\
60-479 Pozna\'n, Poland\\
\texttt{blaszczyk@amu.edu.pl}\medskip

\noindent \textsc{Jos\'e Gabriel Carrasquel-Vera}\\
Institut de Recherche en Math\'ematique et Physique\\
Universit\'e catholique de Louvain\\
2 Chemin du Cyclotron\\
1348 Louvain-la-Neuve, Belgium\\
\texttt{jose.carrasquel@uclouvain.be}
\end{document}